\theoremstyle{plain}
\newtheorem{theorem}{Theorem}[section]
\theoremstyle{remark}
\def\rmdigits{\mathcode`0="0030\mathcode`1="0031\mathcode`2="0032\mathcode`3="0033\mathcode`4="0034                 
              \mathcode`5="0035\mathcode`6="0036\mathcode`7="0037\mathcode`8="0038\mathcode`9="0039}	
\def\b#1{{\bf #1}}
\def\specradius#1{\rho\left({\uppercase{\bf #1}}\right)}
\def\specradiussub#1#2{\rho\left({\uppercase{\bf #1}}_{\rmdigits{\mit #2}}\right)}
\def\spectrum#1{\sigma\left({\uppercase{\bf #1}}\right)}
\def\matsub#1#2{{\uppercase{\bf #1}}_{\rmdigits{\mit\lowercase{#2}}}}
\def\vecsub#1#2{{\lowercase{\bf #1}}_{\rmdigits{\mit\lowercase{#2}}}}
\def\size#1#2{{\mit\rmdigits {#1}\times {#2}}}
\def\r#1{{\mit\rmdigits\Re^{#1}}}
\def\implies{~~\Longrightarrow~~}
\def\and{~~~~{\rm and}~~~~}
\def\norm#1{\left\|#1\right\|}
\def\onenorm#1{\left\|#1\right\|_1}
\def\twonorm#1{\left\|#1\right\|_2}
\font\bb=msbm10.pfb               
\def\r#1{\hbox{\bb R}^{#1}}
\def\eqalign#1{\null\,\vcenter{\openup1\jot \m@th
   \ialign{\strut\hfil$\displaystyle{##}$&$\displaystyle{{}##}$\hfil
      \crcr#1\crcr}}\,}
\def\m@th{\mathsurround=0pt}
\def\pmatrix#1{\left(\null\,\vcenter{\normalbaselines\m@th
    \ialign{$##$&&\quad$##$\crcr
      \mathstrut\crcr\noalign{\kern-\baselineskip}
      #1\crcr\mathstrut\crcr\noalign{\kern-\baselineskip}}}\,\right)} 
\def\rmatrix#1{\left(\null\,\vcenter{\normalbaselines\m@th
    \ialign{\hfill$##$\hfil&&\quad\hfill$##$\hfil\crcr
      \mathstrut\crcr\noalign{\kern-\baselineskip}
      #1\crcr\mathstrut\crcr\noalign{\kern-\baselineskip}}}\,\right)} 
\def\nakedrmatrix#1{\null\,\vcenter{\normalbaselines\m@th
    \ialign{\hfill$##$\hfil&&\quad\hfill$##$\hfil\crcr
      \mathstrut\crcr\noalign{\kern-\baselineskip}
      #1\crcr\mathstrut\crcr\noalign{\kern-\baselineskip}}}\,}        
\def\lmatrix#1{\left(\null\,\vcenter{\normalbaselines\m@th
    \ialign{\hfil$##$\hfill&&\quad\hfil$##$\hfill\crcr
      \mathstrut\crcr\noalign{\kern-\baselineskip}
      #1\crcr\mathstrut\crcr\noalign{\kern-\baselineskip}}}\,\right)} 
\def\nakedlmatrix#1{\null\,\vcenter{\normalbaselines\m@th
    \ialign{\hfil$##$\hfill&&\quad\hfil$##$\hfill\crcr
      \mathstrut\crcr\noalign{\kern-\baselineskip}
      #1\crcr\mathstrut\crcr\noalign{\kern-\baselineskip}}}\,}        
\def\sevenpmatrix#1{{\sevenpoint\pmatrix{#1}}}  
\def\phrase#1{\hbox{\quad {#1} \quad}}          
\def\phraseoverarrow#1{\setbox0=\hbox{\enspace #1\enspace}
                       \setbox1=\hbox to \wd0{\rightarrowfill}
                       \buildrel{\box0}\over{\box1}
                       }
\font\eightrm=cmr8
\font\eightmit=cmmi8
\font\eightsy=cmsy8
\font\eightbf=cmbx8
\font\eightit=cmti8
\font\eightsl=cmsl8
\font\eightcmbxti=cmbxti10 at 8pt
\font\sevenrm=cmr7
\font\sevenmit=cmmi7
\font\sevensy=cmsy7
\font\sevenex=cmex7
\font\sevenit=cmti7
\font\sevensl=cmsl8 at 7pt
\font\sevenbf=cmbx7
\font\sevencmbxti=cmbxti10 at 7pt
\font\sixrm=cmr6
\font\sixmit=cmmi6
\font\sixsy=cmsy6
\font\sixex=cmex10 at 6pt
\font\sixbf=cmbx6
\font\sixit=cmti10 at 6pt
\font\sixsl=cmsl10 at 6pt
\font\fiverm=cmr5
\font\fivemit=cmmi5
\font\fivesy=cmsy5
\font\fiveex=cmex10 at 5pt
\font\fivebf=cmbx5
\font\fiveit=cmti10 at 5pt
\font\fivesl=cmsl10 at 5pt
\font\fourrm=cmr5 at 4pt
\font\fourmit=cmmi5 at 4pt
\font\foursy=cmsy5 at 4pt
\font\fourex=cmex7 at 4pt
\font\fourit=cmti7 at 4pt
\font\foursl=cmsl8 at 4pt
\font\fourbf=cmbx5 at 4pt
\def\sevenpoint{%
    \textfont0=\sevenrm    \scriptfont0=\fiverm    \scriptscriptfont0=\fourrm%
    \textfont1=\sevenmit   \scriptfont1=\fivemit   \scriptscriptfont1=\fourmit%
    \textfont2=\sevensy    \scriptfont2=\fivesy    \scriptscriptfont2=\foursy%
    \textfont3=\sevenex   \scriptfont3=\fiveex    \scriptscriptfont3=\fourex%
    \textfont4=\sevenit    \scriptfont4=\fiveit    \scriptscriptfont4=\fourit%
    \textfont5=\sevensl    \scriptfont5=\fivesl    \scriptscriptfont5=\foursl%
    \textfont6=\sevenbf    \scriptfont6=\fivebf    \scriptscriptfont6=\fourbf%
  \baselineskip=8pt
  \normalbaselineskip=8pt
  \lineskiplimit=1pt
  \lineskip=2pt minus .5pt
  \parindent=0pt%
  \sevenrm%
  \let\it=\sevenit%
  \let\bb=\sevenbb%
  \let\timesbi=\sevencmbxti%
  \def\bf{\fam=6\sevenbf}%
  }
\def\eightpoint{%
    \textfont0=\eightrm    \scriptfont0=\sixrm    \scriptscriptfont0=\fiverm%
    \textfont1=\eightmit   \scriptfont1=\sixmit   \scriptscriptfont1=\fivemit%
    \textfont2=\eightsy    \scriptfont2=\sixsy    \scriptscriptfont2=\fivesy%
    \textfont3=\tenex      \scriptfont3=\sixex    \scriptscriptfont3=\fiveex%
    \textfont4=\eightit    \scriptfont4=\sixit    \scriptscriptfont4=\fiveit%
    \textfont5=\eightsl    \scriptfont5=\sixsl    \scriptscriptfont5=\fivesl%
    \textfont6=\eightbf    \scriptfont6=\sixbf    \scriptscriptfont6=\fivebf%
  \baselineskip=9pt
  \normalbaselineskip=9pt
  \lineskiplimit=1pt
  \lineskip=2pt minus .5pt
  \parindent=0pt%
  \eightrm%
  \let\it=\eightit%
  \let\bb=\eightbb%
  \let\timesbi=\eightcmbxti%
  \def\bf{\fam=6\eightbf}
}
\begin{document}

\title{{\itshape Continuity of the Perron Root}}

\author{Carl D. Meyer$^{\ast}$\thanks{$^\ast$Corresponding author. Email: meyer@ncsu.edu\vspace{6pt}}
\\\vspace{6pt}  $^{\ast}${\em{Department of Mathematics, North Carolina State University, Raleigh, NC 27695}}
}

\maketitle

\begin{abstract}
That the Perron root of a square nonnegative matrix $\b A$ varies continuously with the entries in $\b A$ is a corollary
of theorems regarding continuity of eigenvalues or roots of polynomial equations, the proofs of which necessarily
involve complex numbers. But since continuity of the Perron root is a question that is entirely in the field of real numbers, it seems
reasonable that there should exist a development involving only real analysis. 
This article presents a simple and completely self-contained development that depends only on real numbers and first principles.

\begin{keywords}Perron root; Perron--Frobenius theory; Nonnegative matrices
\end{keywords}

\begin{classcode}1502; 15A18; 15B48 \end{classcode}

\end{abstract}

\section{Introduction}
The spectral radius \hbox{$r=\specradius a$} of a square matrix with nonnegative entries is called the
{\it Perron root} of $\b A$ because the celebrated Perron--Frobenius theory
(summarized below in \S\ref{PerronFrobeniusBasics}) guarantees that $r$ is an eigenvalue for $\b A.$
If $\{\matsub ak\}_{k=1}^\infty$ is a sequence of $\size nn$ nonnegative matrices with respective Perron roots $r_k,$ and if
$\lim_{k\to\infty}\matsub ak=\b A,$  then it seems rather intuitive that $\lim_{k\to\infty}r_k=r,$ for otherwise
something would be dreadfully wrong. But this is not a proof. In fact, a simple self-contained proof 
depending only on first principles that are strictly in the realm of real numbers seems to have been elusive.

The standard treatment is usually to pawn off the result as a corollary to theorems regarding the continuity of eigenvalues
for general matrices.
For example, citing the continuity of roots of polynomial equations is an easy dodge, but it fails to satisfy because
it buries the issue under complex analysis involving Rouch\'e's theorem which itself requires the argument principle. 
And then there is Kato's development \cite{Kato} of eigenvalue continuity built around resolvent integrals, which also requires some heavy
lifting with complex analysis.  The approach to continuity in \cite{HornAndJohnson} that utilizes Schur's decomposition in terms of unitary
matrices is concise and can be cited or adapted, but it too must necessarily venture outside the realm of real numbers because there is no real version of Schur's theorem that does the job.

While all reference to complex numbers cannot be completely expunged (e.g., nonnegative matrices can certainly have complex
eigenvalues, and the definition of spectral radius given in PF1 below is dependent on them),
it is nevertheless true that the
continuity of the Perron root is an issue that is entirely in the realm of real numbers, so it seems only
reasonable that there should be a simple argument involving only real analysis. The purpose
of this article is to present a simple and completely self-contained development that is strictly in the real domain
and depends only on rudimentary principles from real analysis together with
basic Perron--Frobenius facts as summarized below.

\section{Perron--Frobenius Basics}\label{PerronFrobeniusBasics}
The only Perron--Frobenius facts required to establish the continuity of the Perron root are given here.
Details and the complete theory can be found in \cite[Chapter 8]{MeyerText}.
If $\b A_{\size nn}\geq\b 0$ (entrywise) whose spectrum is $\spectrum a,$ then:

\medskip
\begin{enumerate}
\item[PF1.] The spectral radius $r=\specradius a=\displaystyle\max\big\{|\lambda|\,\big|\,\lambda\in\spectrum a\big\}$
is an eigenvalue \hbox{for $\b A.$} 

\medskip
\item[PF2.] There is an associated eigenvector  $\b x\neq\b 0$ such that $\b A\b x=r\b x,$
where $\b x\geq\b 0.$ Such vectors can always be normalized so that $\onenorm{\b x}=1,$ and when this
is done, the resulting eigenvector is referred to as a {\it Perron vector} for $\b A.$

\medskip
\item[PF3.] If $\b A$ is irreducible (i.e., no permutation similarity transformation of $\b A$
can produce a block triangular form with square diagonal blocks), then $r>0$ and $\b x>0$ when $n\geq 2.$

\medskip
\item[PF4.] If $\b 0\leq\b A\leq\b B$ (entrywise), then $\specradius a\leq\specradius b.$ In particular, if $\b A$ is a square
submatrix of $\b B,$ then $\specradius a\leq\specradius b.$

\end{enumerate}

\section{The Development}
Throughout, let $\{\matsub ak\}_{k=1}^\infty$ be a sequence of $\size nn$ nonnegative matrices
with respective Perron roots $r_k,$ and assume that $\lim_{k\to\infty}\matsub ak=\b A.$ 
The aim is to prove that $\lim_{k\to\infty} r_k=r,$ where $r$ is the Perron root of $\b A.$  

Since each $\matsub ak\geq\b 0,$ 
it is apparent that $\b A\geq\b 0,$  so the argument can be
divided into two cases (or theorems) in which (1) $\b A$ is nonnegative and irreducible; and
(2) $\b A$ is nonnegative and reducible.

\subsection*{The Irreducible Case}
When $\b A$ is irreducible, the proof is essentially a ``one-liner.''

\begin{theorem}\label{IrreducibleCase}
If $\b A$ is irreducible, then $r_k\to r.$
\end{theorem}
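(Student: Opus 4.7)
The plan is to prove $r_k\to r$ by squeezing $r_k$ between two sequences, both with limit $r$. The crucial enabling fact is that irreducibility of $\b A$ forces its Perron vector to be \emph{strictly} positive (PF3), which is exactly what lets one divide componentwise by its entries.

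First, I would fix once and for all the Perron vector $\b x>\b 0$ of $\b A$, so $\b A\b x=r\b x$ with every $x_i>0$. Entrywise convergence $\matsub ak\to\b A$ then yields $\matsub ak\b x\to\b A\b x=r\b x$ entrywise, and since each $x_i>0$ the ratios $(\matsub ak\b x)_i/x_i$ converge to $r$ for every $i$.

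The main step is the sandwich bound
$$\min_{1\le i\le n}\frac{(\matsub ak\b x)_i}{x_i}\;\le\;r_k\;\le\;\max_{1\le i\le n}\frac{(\matsub ak\b x)_i}{x_i}.$$
To establish this I would apply PF1 and PF2 to the transpose of $\matsub ak$ (whose spectrum coincides with that of $\matsub ak$, and hence has the same Perron root $r_k$) to extract a nonnegative left Perron vector $\b u_k$ satisfying $\b u_k^T\matsub ak=r_k\b u_k^T$ with $\b u_k\ge\b 0$ and $\b u_k\ne\b 0$. Then
$$r_k\,(\b u_k^T\b x)\;=\;\b u_k^T\matsub ak\b x\;=\;\sum_{i=1}^{n}(u_{k,i}\,x_i)\cdot\frac{(\matsub ak\b x)_i}{x_i},$$
and, because $\b x>\b 0$ together with $\b u_k\ge\b 0$, $\b u_k\ne\b 0$ makes $\b u_k^T\b x=\sum_i u_{k,i}x_i$ strictly positive, the number $r_k$ is a genuine weighted average of the ratios $(\matsub ak\b x)_i/x_i$ and therefore lies between their minimum and maximum.

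Finally, the squeeze theorem finishes the job: both bounds tend to $r$, forcing $r_k\to r$. The only step with any real content is the sandwich inequality, and I expect this to be the main (but minor) obstacle; once one spots that the left Perron vector of $\matsub ak$ furnishes precisely the weights needed to express $r_k$ as a weighted average of the componentwise ratios, the proof collapses to essentially one line, matching the author's promise.
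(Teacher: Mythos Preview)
Your proof is correct and self-contained. Both your argument and the paper's hinge on the same crucial fact---irreducibility of $\b A$ yields a strictly positive Perron vector---and both pair a Perron vector of $\b A$ with one of $\matsub ak$ through an inner product. The execution differs, however: the paper takes the \emph{left} Perron vector $\b q^T$ of $\b A$ and the \emph{right} Perron vector $\b p_k$ of $\matsub ak$, writes the identity $(r_k-r)\,\b q^T\b p_k=\b q^T(\matsub ak-\b A)\b p_k$, and bounds the right side by $\twonorm{\matsub ek}$ via Cauchy--Schwarz, arriving at the explicit perturbation estimate $|r_k-r|\le\twonorm{\matsub ek}/q_\star$ with $q_\star=\min_i q_i$. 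Your route swaps the roles (right Perron vector of $\b A$, left Perron vector of $\matsub ak$) and instead recovers a Collatz--Wielandt sandwich for $r_k$, finishing with the squeeze theorem. The paper's version delivers a clean quantitative bound directly in terms of the error matrix $\matsub ek$; yours is arguably more elementary (no Cauchy--Schwarz or norm comparisons needed) and makes transparent that the convergence is driven simply by $\matsub ak\b x\to r\b x$ componentwise.
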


\begin{proof}
Let $\matsub ek=\matsub ak-\b A,$ and let $\b p_k$ and $\b q^T$ be respective right- and left-hand
Perron vectors for $\b A_k$ and $\b A$ with $\onenorm{\b p_k}=1=\onenorm{\b q}.$ If $q_\star=\min q_i,$
and if $\b e$ is a vector of ones, then
\hbox{$\b q\geq q_\star\b e,$}  and
     $$
     \b q^T\b p_k\geq q_\star\b e^T\b p_k=q_\star>0
     \quad\hbox{for all $k.$}
     $$
Using this with the Cauchy--Schwarz inequality and  $\twonorm{\b x}\leq\onenorm{\b x}$ for all $\b x\in\r n$ yields
     $$
     \eqalign{
       |(r_k-r)\b q^T\vecsub pk|
       &=|\b q^T(r_k\vecsub pk)-(r\b q^T)\vecsub pk|
       =|\b q^T(\matsub ak \vecsub pk)-(\b q^T\b A)\vecsub pk|\cr
       &=|\b q^T(\matsub ak-\b A)\vecsub pk|
       =|\b q^T\matsub ek\vecsub pk|
       \leq\twonorm{\matsub ek} \cr
       \implies
       &|r_k-r|\leq{\twonorm{\matsub ek}\over \b q^T\vecsub pk}
       \leq{\twonorm{\matsub ek}\over q_\star}
       \to 0
       \implies r_k\to r.
       \cr}
     $$
\vskip-30pt\end{proof}

\subsection*{The Reducible Case}

When $\b A$ is reducible, the proof requires a few more lines than the irreducible case.

\begin{theorem}[The Reducible Case]\label{ReducibleCase}
If $\b A$ is reducible, then $r_k\to r.$
\end{theorem}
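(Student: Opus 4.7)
The plan is to sandwich $r_k$ between a lower bound and an upper bound that both converge to $r$, using Theorem~\ref{IrreducibleCase} as the main tool in each direction.

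For the lower bound, I would locate an irreducible principal submatrix $\b B$ of $\b A$ satisfying $\rho(\b B)=r$, by selecting among all principal submatrices of $\b A$ whose Perron root equals $r$ (a nonempty collection, since it contains $\b A$) one of minimum size. Any minimum-size member must be irreducible: if it were reducible it would admit a permutation similarity to $2\times 2$ block triangular form with square diagonal blocks, and since the spectrum of a block triangular matrix is the union of the spectra of its diagonal blocks, one of those smaller diagonal blocks would itself be a principal submatrix of $\b A$ with Perron root $r$, contradicting minimality. Writing $\b B_k$ for the principal submatrix of $\b A_k$ indexed by the same rows and columns as $\b B$, the convergence $\b B_k\to\b B$ together with Theorem~\ref{IrreducibleCase} yields $\rho(\b B_k)\to r$, and PF4 gives $\rho(\b B_k)\le r_k$, so $\liminf_k r_k\ge r$.

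For the upper bound, I would perturb to the positive case. Fix $\epsilon>0$ and let $\b J$ denote the $n\times n$ all-ones matrix; both $\b A+\epsilon\b J$ and $\b A_k+\epsilon\b J$ are strictly positive, hence irreducible, and $\b A_k+\epsilon\b J\to\b A+\epsilon\b J$. Theorem~\ref{IrreducibleCase} gives $\rho(\b A_k+\epsilon\b J)\to\rho(\b A+\epsilon\b J)$, and combined with PF4 applied to $\b A_k\le\b A_k+\epsilon\b J$ this yields $\limsup_k r_k\le\rho(\b A+\epsilon\b J)$ for every $\epsilon>0$.

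The main obstacle is the remaining step, which superficially looks like the very continuity statement being proved: one must show $\rho(\b A+\epsilon\b J)\to r$ as $\epsilon\to 0^+$. The plan is to dispatch this directly via monotonicity and compactness. PF4 makes $\epsilon\mapsto\rho(\b A+\epsilon\b J)$ nondecreasing, so $L=\lim_{\epsilon\to 0^+}\rho(\b A+\epsilon\b J)$ exists and satisfies $L\ge r$. Choosing any $\epsilon_m\to 0^+$ together with Perron vectors $\b v_m$ normalized by $\onenorm{\b v_m}=1$, compactness of the probability simplex produces a subsequential limit $\b v^*\ge\b 0$ with $\onenorm{\b v^*}=1$; passing to the limit in the Perron equation for $\b A+\epsilon_m\b J$ gives $\b A\b v^*=L\b v^*$, so $L$ is an eigenvalue of $\b A$ and PF1 forces $|L|\le r$. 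Combined with $L\ge r$, this yields $L=r$, completing the upper bound and hence the proof.
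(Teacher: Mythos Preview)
Your argument is correct. The lower bound via a minimal irreducible principal submatrix $\b B$ with $\rho(\b B)=r$ is exactly what the paper does (the paper finds $\b B$ by block-triangularizing $\b A$ rather than by your minimality argument, but the two are equivalent), and your final compactness step---extracting a convergent subsequence of normalized Perron vectors and passing to the limit in the eigenvector equation to show the limiting spectral radius is an eigenvalue of $\b A$---is also identical to the paper's. Where you diverge is in the packaging of the upper bound: the paper applies the Perron-vector compactness argument \emph{directly} to subsequences of $\{r_k\}$ and $\{\b v_k\}$, showing that every subsequential limit $r^\star$ of $\{r_k\}$ satisfies $r^\star\le r$, and then invokes the ``every subsequence has a sub-subsequence converging to $r$'' criterion. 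You instead interpose the perturbation $\b A+\epsilon\b J$, use Theorem~\ref{IrreducibleCase} once more to push $\limsup_k r_k$ below $\rho(\b A+\epsilon\b J)$, and only then run the compactness argument on the sequence $\b A+\epsilon_m\b J$. Your route has the pleasant feature that the $r=0$ case needs no special treatment (the paper handles it separately via nilpotency), and the $\liminf/\limsup$ sandwich is tidy; on the other hand the $\epsilon\b J$ detour is not strictly necessary, since the same compactness step you use to prove $L=r$ could have been applied directly to $\{\b A_k\}$, which is precisely what the paper does.
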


\begin{proof}
If $r=0,$ then $\b A$ is nilpotent, say $\b A^p=\b 0,$ so
     $$
       \big[r_k\big]^p
       =\big[\rho(\matsub ak)\big]^p
       =\rho(\matsub ak^p)
       \leq\norm{\matsub ak^p}
       \to\norm{\b A^p}
       = 0
       \implies r_k\to 0=r.
     $$
Now assume that $r>0.$
The foundation for the remaining part of the proof rests on the following realization.
\begin{eqnarray}\label{TheLemma}
\left\{\nakedlmatrix
{
\hbox{\em Every subsequence $\{r_{k_i}\}$ of $\{r_k\}$ has a sub-subsequence $\{r_{k_{i_j}}\}$ such that} \cr
\noalign{\vskip 4pt}
\hfill\hbox{$r_{k_{i_j}}\to r.$}
\cr}
\right\}   
\end{eqnarray}
To establish this, adopt the notation
$\b X\sim\b Y$ to mean that $\b Y=\b P^T\b X\b P$ for some permutation matrix $\b P$ so that
\hbox{$\b A\sim\sevenpmatrix{\b U& \b V\cr \b 0&\b W\cr},$}  where $\b U$ and $\b W$
are square. If either $\b U$ or $\b W$ is reducible, then they in turn can be reduced in the same fashion.
Reduction of diagonal blocks can continue until at some point
     \begin{eqnarray}
       \b A\sim
       \sevenpmatrix{
       \bullet&\cdots&\bullet&\cdots&\bullet\cr
       \vdots&\ddots&\vdots&&\vdots\cr
       \b 0&\cdots&\b B&\cdots&\bullet\cr
       \vdots&&\vdots&\ddots&\vdots\cr
       \b 0&\cdots&\b 0&\cdots&\bullet\cr
       }
       \label{equationReductionOfReducibleMatrix}
     \end{eqnarray}
is block triangular with square diagonal blocks, one of which---call it $\b B$---is necessarily
irreducible and has  $\specradius b=\specradius a=r>0.$
Apply the same symmetric permutation that produced (\ref{equationReductionOfReducibleMatrix}) 
to each $\matsub ak$ so that
     $$
       \matsub ak\sim
       \sevenpmatrix{
       \bullet&\cdots&\bullet&\cdots&\bullet\cr
       \vdots&\ddots&\vdots&&\vdots\cr
       \bullet&\cdots&\kern2pt\b B_k&\cdots&\bullet\cr
       \vdots&&\vdots&\ddots&\vdots\cr
       \bullet&\cdots&\bullet&\cdots&\bullet\cr
       },
     $$
where $\matsub bk$ and $\b B$ have the same size and occupy the same positions. 
It follows from (PF4) that if $b_k=\specradiussub bk,$ then
$b_k\leq r_k$ for each $k.$
And since $\matsub ak\to\b A$ implies $\matsub bk\to\b B,$ Case (1) (the irreducible case) ensures
$\specradiussub bk\to\specradius b$ so that $b_k\to r.$
In particular, if $\{r_{k_i}\}$ is any subsequence of $\{r_k\},$ then
\begin{eqnarray}\label{equationBkiGoesToR}
b_{k_i}\leq r_{k_i}\ \hbox{for each $k_i,$}
\and
b_{k_i}\to r.
\end{eqnarray}
Every subsequence $\{r_{k_i}\}$ is bounded because
$\specradius\star\leq\norm{\star}$ for any matrix norm, and this implies that
     $
     0\leq r_{k_i}\leq\norm{\matsub a{k_i}}=\norm{\b A+\matsub e{k_i}}
     \to\norm{\b A}.
     $
Hence every subsequence $\{r_{k_i}\}$ has a convergent sub-subsequence $r_{k_{i_j}}\to r^\star.$ This together with
(\ref{equationBkiGoesToR}) yields
     \begin{eqnarray}
     b_{k_{i_j}}\leq r_{k_{i_j}}
     \phrase{so that}
     r\leq r^\star.
     \label{equationRleqRstar}
     \end{eqnarray}
To see that $r^\star=r,$ note that
the sequence of Perron vectors $\{\vecsub v{k_i}\}$ for $\matsub a{k_i}$ is bounded because
each has norm one, so $\{\vecsub v{k_i}\}$ has a convergent subsequence \hbox{$\vecsub v{k_{i_j}}\to\b v^\star\neq\b 0.$} 
Use this together with $r_{k_{i_j}}\to r^\star$ to conclude that
     $$
     \eqalign{
       \b A\b v^\star
       &= \lim\matsub A{k_{i_j}}\kern-2pt\lim\vecsub v{k_{i_j}}
       = \lim[\matsub A{k_{i_j}}\kern-3pt\vecsub v{k_{i_j}}]
       = \lim [r_{k_{i_j}}\kern-3pt\vecsub v{k_{i_j}}]
       = \lim r_{k_{i_j}}\kern-2pt\lim\vecsub v{k_{i_j}}
       = r^\star\b v^\star \cr
       &\implies
       \hbox{$r^\star$ is an eigenvalue for $\b A$} 
       \implies
       r^\star\leq r.
       \cr
       }
     $$
This together with (\ref{equationRleqRstar}) 
ensures that $r^\star=r,$ and thus (\ref{TheLemma}) is established.
To prove that \hbox{$r_k\to r,$}  suppose to the contrary that
$r_k\not\to r$ so that there is a subsequence $\{r_{k_s}\}$ and a number $\epsilon>0$ such that
$|r_{k_s}-r|>\epsilon$ for all $s=1,2,3,\ldots.$ However, (\ref{TheLemma}) guarantees that $\{r_{k_s}\}$ has a subsequence
$\{r_{k_{s_j}}\}$ such that $r_{k_{s_j}}\to r,$ which is a contradiction, and thus $r_k\to r.$
\end{proof}

\section{A Temptation to Avoid}
It is tempting to establish the continuity of the Perron root (or the spectral radius in general)
by using the characterization 
     \begin{eqnarray}\label{AltDefOfSpecRadius}
       \lim_{m\to\infty}\norm{\b X^m}^{1/m}=\specradius x
     \end{eqnarray} 
to simply conclude that
     $$
       \lim_{k\to\infty} r_k
       =\lim_{k\to\infty}\lim_{m\to\infty}\norm{\matsub ak^m}^{1/m}
       =\lim_{m\to\infty}\lim_{k\to\infty}\norm{\matsub ak^m}^{1/m}
       =\lim_{m\to\infty}\norm{\b A^m}^{1/m}
       =r.
     $$
It would be acceptable to interchange the limits on $k$ and $m$ if the convergence in (\ref{AltDefOfSpecRadius}) was uniform
on the set $\cal N=\{\b X\in\r{\size nn}\,|\,\b X\geq\b 0\},$
but alas, it is not. To see this, observe that if $f_m(\b X)=\norm{\b X^m}^{1/m}$ and $f(\b X)=\specradius x,$ then 
$f_m(\alpha\b X)=\alpha f_m(\b X)$ and $f(\alpha\b X)=\alpha f(\b X)$
for all $\alpha\geq 0$ and for all $\b X\in\cal N.$
The convergence of $f_m$ to $f$ cannot be uniform because otherwise, for each $\epsilon>0,$ there would exist an integer $M$ such that
$m\geq M$ implies
     $$
       \left|f_m(\b X)-f(\b X)\right|<\epsilon
       \quad\hbox{for all $\b X\in\cal N.$} 
     $$
In particular, $m\geq M$ implies that
     $$
     \left|f_m(\alpha\b X)-f(\alpha\b X)\right|<\epsilon
     \quad\hbox{for all $\alpha\geq 0$ and  $\b X\in\cal N,$} 
     $$
or equivalently,
     $$
       \alpha\,\left|f_m(\b X)-f(\b X)\right|
       <\epsilon
       \quad\hbox{for all $\alpha\geq 0$ and  $\b X\in\cal N,$}
     $$
which is impossible.

\section{Concluding Remarks}
The developments given in this article provide a simple proof
that is entirely contained in the real domain for establishing the continuity of the Perron root of a nonnegative matrix.
However, they do not apply for proving the continuity of the spectral radius in general---for this complex analysis cannot
be avoided.  Even if the limit of a sequence of general matrices is nonnegative,
the spectral radius can correspond to a complex eigenvalue so that the techniques of Theorems
\ref{IrreducibleCase} and \ref{ReducibleCase} do not apply---e.g., consider
     \begin{eqnarray}\label{Example}
       \matsub ak=\pmatrix{\kern 12pt0&1&0\cr -1/k&0&1\cr \kern 12pt 1&0&0\cr}
       \to\pmatrix{0&1&0\cr 0&0&1\cr 1&0&0\cr}.
     \end{eqnarray} 

Even though the continuity of the Perron root is not new, there is nevertheless current interest in extensions such as those given
in \cite{LemmensAndNussbaum}.

\section*{Acknowledgements}
The author wishes to thank the referee for providing suggestions and corrections
that enhanced the exposition. The referee is also responsible for
example (\ref{Example}), and for pointing out
the work in \cite{LemmensAndNussbaum}. In addition, thanks are extended to
Stephen Campbell for suggesting the simple explanation of why
the convergence of (\ref{AltDefOfSpecRadius}) is not uniform.

\end{document}